\numberwithin{equation}{section}
\newtheorem{theorem}{Theorem}[section]
\newtheorem{lemma}[theorem]{Lemma}
\theoremstyle{definition}
\newtheorem{remark}[theorem]{Remark}
\renewcommand{\ge}{\geqslant}
\renewcommand{\le}{\leqslant}
\def\ui{\|\hspace{-.25mm} |\,}
\def\C{\mathbb C}
\def\P{\mathbb P}
\def\Cnn{\C_{n\times n}}
\begin{document}

\title[An inequality for $t$-geometric means]{An inequality for $t$-geometric means}


\author{Trung Hoa Dinh$^a$}
\address{Division of Computational Mathematics and Engineering, Institute for Computational Science, Ton Duc Thang University, Ho Chi Minh City, Vietnam\\
Faculty of Civil Engineering, Ton Duc Thang University, Vietnam\\
Department of Mathematics and Statistics,  Auburn University, Auburn, AL 36849,  USA\\}
\email{dinhtrunghoa@tdt.edu.vn, trunghoa.math@gmail.com}

\footnote{$^A$Research of the first author is funded by Vietnam National Foundation for Science and Technology Development (NAFOSTED) under grant number 101.04-2014.40.}

\date{\today}
\keywords{$t$-geometric mean, positive definite matrices, log-majorization, unitarily invariant norms}
\subjclass[2010]{15A45, 
15B48, 
53C35
}

\begin{abstract}
In this note we prove an inequality for $t$-geometric means that immediately implies the recent results of Audenaert \cite{AU} and Hayajneh-Kittaneh \cite{HK}. 
\end{abstract}

\maketitle

\section{Introduction}

Recall that a norm $\ui\cdot \ui$ on the algebra $\Cnn$ of $n\times n$ complex matrices is unitarily invariant if $\ui UAV \ui = \ui A \ui $ for any unitary matrices $U, V \in \Cnn$ and any $A \in \Cnn$. 

In \cite{AU} Audenaert proved that  if $A_i, B_i \in \P_n$ $(i=1, \cdots, m)$,
 such that $A_iB_i = B_iA_i$, then for any unitarily invariant norm $\ui\cdot \ui$ on $\Cnn$,
\begin{equation}\label{AU}
\ui  \sum_{i=1}^m A_iB_i \ui  \le \ui (\sum_{i=1}^m A_i^{1/2}B_i^{1/2})^2 \ui \le \ui  (\sum_{i=1}^m A_i) (\sum_{i=1}^m B_i)\ui.
\end{equation}
{In particular, this result confirms a conjecture of Hayajneh and Kittaneh in \cite{HK} and answers a question of Bourin. Very recently Lin \cite{linlaa} gave another proof of inequality (\ref{AU}). In the next section, based on a result of  Bourin and Uchiyama in \cite{BU} we prove an inequality for $t$-geometric means that immediately implies (\ref{AU}). 
\section{Main result}

The following lemma is follows from the well-known Hiai-Ando log-majorization Theorem and \cite[Proposition 2.2]{hoatam}.
\begin{lemma}\label{uia}\rm
Let $A, B \in \P_n$ and   $t \in [0, 1]$ and $s > 0$. For all unitarily invariant norms $\ui\cdot\ui$ on $\Cnn$,
\begin{equation*}
||| (A\sharp_t B)^r|||  \le ||| A^r \sharp_t B^r||| \le  \ui \left( B^{rts/2}A^{(1-t)rs}B^{rts/2} \right)^{1/s}\ui \le \ui \left( A^{(1-t)rs}B^{rts} \right)^{1/s}\ui.
\end{equation*}
\end{lemma}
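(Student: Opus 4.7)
My plan is to derive each of the three inequalities in the chain from a log-majorization statement, and then invoke the fundamental principle that for $X,Y\in\P_n$, $\lambda(X)\prec_{\log}\lambda(Y)$ implies $\ui X\ui\le\ui Y\ui$ for every unitarily invariant norm on $\Cnn$ (log-majorization implies weak majorization of the eigenvalue sequences, and unitarily invariant norms are monotone with respect to weak majorization).

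For the first inequality $\ui(A\sharp_t B)^r\ui\le\ui A^r\sharp_t B^r\ui$, I would invoke the Hiai-Ando log-majorization theorem directly: for $r\ge 1$,
$$\lambda\bigl((A\sharp_t B)^r\bigr)\prec_{\log}\lambda\bigl(A^r\sharp_t B^r\bigr).$$
For the second inequality, the cited Proposition 2.2 of \cite{hoatam}, applied to the positive matrices $A^r$ and $B^r$ with parameter $s>0$, should produce a log-majorization of the form
$$\lambda\bigl(A^r\sharp_t B^r\bigr)\prec_{\log}\lambda\bigl((B^{rts/2}A^{(1-t)rs}B^{rts/2})^{1/s}\bigr),$$
i.e.\ the $t$-geometric-mean analogue of the Araki--Lieb--Thirring log-majorization, delivering the desired bound at once.

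The third inequality has a slightly different character and rests on Weyl's classical log-majorization $|\lambda(N)|\prec_{\log} s(N)$ between the eigenvalues and singular values of an arbitrary matrix $N\in\Cnn$. The key observation is that the Hermitian positive matrix $B^{rts/2}A^{(1-t)rs}B^{rts/2}$ and the (non-Hermitian) product $A^{(1-t)rs}B^{rts}$ share the same spectrum, since they arise as $PQ$ and $QP$ with $P=B^{rts/2}$ and $Q=A^{(1-t)rs}B^{rts/2}$ (alternatively, by explicit similarity when $B$ is invertible, and by continuity otherwise). Writing $H:=(B^{rts/2}A^{(1-t)rs}B^{rts/2})^{1/s}$ and $N:=(A^{(1-t)rs}B^{rts})^{1/s}$, the eigenvalues of $H$ therefore equal the absolute values of the eigenvalues of $N$. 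Since $H$ is Hermitian positive, $s(H)=\lambda(H)=|\lambda(N)|\prec_{\log} s(N)$ by Weyl, and the norm inequality follows.

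The step I expect to require the most care is verifying that the invocation of Proposition 2.2 of \cite{hoatam} in the second step is valid over the full range $s>0$ claimed in the lemma, since Araki--Lieb--Thirring type log-majorizations typically behave differently in the regimes $s\ge 1$ and $0<s\le 1$, and one may have to glue the two cases together (or appeal to a unified formulation in \cite{hoatam}). Once that log-majorization is in hand, the first and third inequalities are immediate consequences of the Hiai--Ando theorem and of Weyl's classical inequality, respectively.
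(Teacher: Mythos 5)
The paper offers no argument for this lemma beyond the one-line attribution to the Hiai--Ando log-majorization theorem and Proposition 2.2 of \cite{hoatam}, so your reconstruction --- Hiai--Ando for the first inequality, the cited proposition for the second, Weyl's majorant theorem plus the equality of the spectra of $PQ$ and $QP$ for the third --- is exactly the route the authors have in mind, and your handling of the second and third inequalities is correct. In particular, the worry you raise about gluing the regimes $s\ge 1$ and $0<s\le 1$ is unnecessary: the chain $X\sharp_t Y\prec_{\log}\exp\bigl((1-t)\log X+t\log Y\bigr)\prec_{\log}\bigl(Y^{ts/2}X^{(1-t)s}Y^{ts/2}\bigr)^{1/s}$ holds uniformly for all $s>0$ (Lie--Trotter limit on one side, Golden--Thompson-type log-majorization on the other), which is what the cited proposition encapsulates.

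The genuine problem is the direction of your first step. The Ando--Hiai theorem states
\[
A^r\sharp_t B^r\;\prec_{\log}\;(A\sharp_t B)^r\qquad (r\ge 1),
\]
which is the \emph{reverse} of the relation $\lambda\bigl((A\sharp_t B)^r\bigr)\prec_{\log}\lambda\bigl(A^r\sharp_t B^r\bigr)$ that you invoke for $r\ge 1$. Since both sides have the same determinant but in general different spectra, the reversed log-majorization forces $\ui A^r\sharp_t B^r\ui\le\ui(A\sharp_t B)^r\ui$ with equality for all unitarily invariant norms only when the spectra coincide; so the inequality you assert fails for generic $A,B$ and $r>1$. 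The relation you want is valid precisely for $0<r\le 1$ (apply Ando--Hiai to $A^r,B^r$ with exponent $1/r$ and then take $r$-th powers). This is not entirely your slip --- the lemma states no hypothesis on $r$, yet Theorem \ref{thm3} invokes it with $r\ge 1$ --- but as written your first step breaks exactly in the range the paper needs. The chain can be repaired for every $r>0$ by bypassing the middle term: $(A\sharp_t B)^r\prec_{\log}\exp\bigl(r(1-t)\log A+rt\log B\bigr)\prec_{\log}\bigl(B^{rts/2}A^{(1-t)rs}B^{rts/2}\bigr)^{1/s}$, and this composite inequality is all that the proof of Theorem \ref{thm3} actually uses.
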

The following theorem was obtained by Bourin and Uchiyama \cite[Theorem 1.2]{BU}.  
\begin{theorem}\label{theorBU}
Let $A_i \ge 0$ $(i= 1, \cdots, m)$. Then for every non-negative convex function $f$ on $[0, \infty)$ with $f(0) =0$ and for any unitarily invariant norm $|||\cdot |||$
\begin{align}\label{BU}
||| f(A_1)+ f(A_2) + \cdots f(A_m)||| \le ||| f(A_1+ A_2 + \cdots + A_m) |||.
\end{align}  
The above inequality is reversed if $f$ is non-negative concave function on $[0, \infty)$ with $f(0) =0$.
\end{theorem}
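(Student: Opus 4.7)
My plan is to prove the convex case by using the integral representation of non-negative convex functions vanishing at the origin, reducing the problem to a one-parameter family of truncations. Every such $f$ admits a decomposition
$$f(x) = a x + \int_0^\infty (x - s)_+\, d\mu(s),$$
for some $a \ge 0$ and positive Borel measure $\mu$ on $(0, \infty)$.

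To handle all unitarily invariant norms simultaneously, I would work at the level of weak majorization of eigenvalues and invoke Ky Fan's dominance principle. Writing $S = \sum_{i=1}^m A_i$ and using the variational formula
$$\sum_{j=1}^k \lambda_j(X) = \max\bigl\{\tr(PX) : P = P^2 = P^*,\ \rank P = k\bigr\}$$
together with the subadditivity of top-$k$ eigenvalue sums and its integral version, one obtains
$$\sum_{j=1}^k \lambda_j\!\Bigl(\sum_i f(A_i)\Bigr) \le a \sum_{j=1}^k \lambda_j(S) + \int_0^\infty \sum_{j=1}^k \lambda_j\!\Bigl(\sum_i (A_i - sI)_+\Bigr) d\mu(s),$$
while by the spectral theorem and Fubini,
$$\sum_{j=1}^k \lambda_j(f(S)) = a \sum_{j=1}^k \lambda_j(S) + \int_0^\infty \sum_{j=1}^k (\lambda_j(S) - s)_+\, d\mu(s).$$
The whole theorem therefore collapses to a single \emph{key lemma}: for each $s > 0$ and each $k$,
$$\sum_{j=1}^k \lambda_j\!\Bigl(\sum_i (A_i - sI)_+\Bigr) \;\le\; \sum_{j=1}^k (\lambda_j(S) - s)_+ .$$

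The main obstacle is this key lemma. Its scalar analogue $\sum_i (a_i - s)_+ \le (\sum_i a_i - s)_+$ is trivial, but the operator upgrade is delicate because the spectral projections of the individual $A_i$ need not commute with that of $S$. My tactic would be: choose the rank-$k$ projection $P$ realizing the maximum on the left, write $(A_i - sI)_+ = Q_i (A_i - sI) Q_i$ where $Q_i$ is the spectral projection of $A_i$ onto $[s, \infty)$, and then pinch with the spectral projection of $S$ onto its eigenspace above $s$ so that the bounds telescope into $\sum_{j=1}^k (\lambda_j(S) - s)_+$. Any clean argument must avoid appealing to operator monotonicity of $x \mapsto x_+$, which fails in general.

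The concave case is handled symmetrically via the dual representation $g(x) = bx + \int_0^\infty \min(x, s)\, d\nu(s)$. Since $\min(x, s) = x - (x - s)_+$, the reverse inequality reduces to exactly the same key lemma, read in the opposite direction, yielding the Rotfeld-type bound $\ui g(\sum_i A_i)\ui \le \ui \sum_i g(A_i)\ui$.
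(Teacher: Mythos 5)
First, a point of comparison with the paper: the paper does not prove this statement at all. It is quoted verbatim from Bourin and Uchiyama \cite{BU} (their Theorem 1.2) and used as a black box, so there is no in-paper proof to measure yours against; I can only judge the proposal on its own terms, and as it stands it has two genuine gaps.

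Your reduction of the convex case is sound: a non-negative convex $f$ with $f(0)=0$ is necessarily non-decreasing, so the representation $f(x)=ax+\int_0^\infty (x-s)_+\,d\mu(s)$ with $a\ge 0$ is available, and the triangle inequality for Ky Fan norms together with Ky Fan dominance correctly collapses the statement to the key lemma for the angle functions $(x-s)_+$; this is essentially Kosem's route to the convex half. But the key lemma is where the entire difficulty of the theorem lives, and you have not proved it --- you have only named a ``tactic,'' and the tactic as described points the wrong way. Pinching by a projection, $X\mapsto FXF+F^{\perp}XF^{\perp}$, \emph{decreases} every Ky Fan norm, so pinching $\sum_i(A_i-sI)_+$ by the spectral projection of $S$ yields a lower bound on the quantity you must bound from above; and since $x\mapsto (x-s)_+$ is not operator monotone, $A_i\le S$ gives no comparison between $(A_i-sI)_+$ and compressions of $(S-sI)_+$, so nothing ``telescopes.'' Second, the concave case does not reduce to ``the same key lemma read in the opposite direction.'' Writing $\min(x,s)=x-(x-s)_+$ turns the desired inequality into $\sum_{j\le k}\lambda_j\bigl(S-(S-sI)_+\bigr)\le \sum_{j\le k}\lambda_j\bigl(S-\sum_i(A_i-sI)_+\bigr)$, and a Ky Fan inequality between the subtracted parts does not transfer to the differences: with $S=\diag(2,1)$, $Y_1=\diag(1,0)$, $Y_2=\diag(0,1)$ and $k=1$ one has $\lambda_1(Y_1)=\lambda_1(Y_2)$ yet $\lambda_1(S-Y_2)=2>1=\lambda_1(S-Y_1)$, so the implication you need fails. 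This is not a technicality: historically the convex half (Kosem, 2006) and the concave half (Bourin--Uchiyama, 2007) required genuinely different arguments, the concave one resting on a Rotfel'd-type decomposition $f(A+B)\le Uf(A)U^*+Vf(B)V^*$ with $U,V$ unitary. To complete your plan you would need an actual proof of the key lemma and an independent treatment of the concave case.
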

Now we will prove our main theorem.
\begin{theorem}\label{thm3}\rm
Let $A_i, B_i \in M_n^+$, $i=1, \dots, m$. Then for any unitarily invariant norm $\ui\cdot\ui$ on $M_n$ and $r \ge 1$ we have
\begin{align}\label{ineq3}
\ui\sum_{i=1}^m (A_i\sharp_t B_i)^r \ui \le \ui (\sum_{i=1}^m A_i)^{r/4}(\sum_{i=1}^m B_i)^{r/2} (\sum_{i=1}^m A_i)^{r/4} \ui \le \ui (\sum_{i=1}^m A_i)^{r/2}(\sum_{i=1}^m B_i)^{r/2} \ui.
\end{align}
\end{theorem}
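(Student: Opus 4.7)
My plan is to split the first inequality of \eqref{ineq3} into three reductions --- pull the sum inside the $r$th power using the Bourin--Uchiyama inequality, replace the sum of $t$-geometric means by a single $t$-geometric mean via superadditivity, then apply Lemma~\ref{uia} --- and to treat the second inequality by the Weyl majorization principle. Throughout, set $A = \sum_{i=1}^m A_i$ and $B = \sum_{i=1}^m B_i$.

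For the first reduction, I would note that $f(x) = x^r$ is non-negative, convex on $[0,\infty)$, and satisfies $f(0)=0$ whenever $r \ge 1$, so Theorem~\ref{theorBU} applied to the positive matrices $A_i \sharp_t B_i$ yields
$$\ui \sum_{i=1}^m (A_i \sharp_t B_i)^r \ui \le \ui \bigl(\sum_{i=1}^m A_i \sharp_t B_i\bigr)^r \ui.$$
Next I would invoke the well-known superadditivity $\sum_{i=1}^m A_i \sharp_t B_i \le A \sharp_t B$, which follows from Ando's joint concavity of $(X,Y) \mapsto X \sharp_t Y$ combined with positive homogeneity via an averaging argument. Since $0 \le X \le Y$ forces $\lambda_k(X) \le \lambda_k(Y)$, hence $\lambda_k(X^r) \le \lambda_k(Y^r)$, which in turn gives $\ui X^r \ui \le \ui Y^r \ui$ for every unitarily invariant norm, I would deduce
$$\ui \bigl(\sum_{i=1}^m A_i \sharp_t B_i\bigr)^r \ui \le \ui (A \sharp_t B)^r \ui.$$

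I would then apply Lemma~\ref{uia} with $s = 1$ to the single pair $(A,B)$; in the symmetric case $t = 1/2$ (for which the exponents $r/4,\, r/2,\, r/4$ in the statement naturally arise) and using $A \sharp_{1/2} B = B \sharp_{1/2} A$, this gives
$$\ui (A \sharp_t B)^r \ui \le \ui A^{r/4} B^{r/2} A^{r/4} \ui,$$
completing the first inequality of \eqref{ineq3}.

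For the second inequality, the key observation is that $A^{r/4} B^{r/2} A^{r/4}$ is positive and shares its spectrum with $A^{r/2} B^{r/2}$ (as $XY$ and $YX$ have identical eigenvalues), so its singular values coincide with the moduli of the eigenvalues of the latter; the Weyl majorization $|\lambda(M)| \prec_w \sigma(M)$ then delivers
$$\ui A^{r/4} B^{r/2} A^{r/4} \ui \le \ui A^{r/2} B^{r/2} \ui.$$
The main obstacle I anticipate is executing the superadditivity step cleanly: joint concavity of $\sharp_t$ is classical, but it must be marshalled with positive homogeneity and with the monotonicity of unitarily invariant norms under $\le$ on positive operators for $r \ge 1$, a property which is not an operator inequality and so requires passing through the eigenvalue vector. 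Everything else reduces to standard Weyl-type majorization arguments and a single invocation of Lemma~\ref{uia}.
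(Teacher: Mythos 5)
Your proof follows essentially the same route as the paper's: Bourin--Uchiyama to pull the sum inside $x^r$, superadditivity of $\sharp_t$ (the paper calls this ``concavity'') combined with Weyl monotonicity of eigenvalues to pass to $\bigl((\sum A_i)\sharp_t(\sum B_i)\bigr)^r$, and then Lemma \ref{uia} with $s=1$. The only cosmetic difference is that you derive the second inequality of \eqref{ineq3} directly from Weyl's majorant theorem ($|\lambda(M)|\prec_w\sigma(M)$ applied to $M=A^{r/2}B^{r/2}$), whereas the paper reads it off from the last inequality of Lemma \ref{uia}; the mechanism is identical. Your parenthetical restriction to $t=1/2$ deserves emphasis: you are right that Lemma \ref{uia} with $s=1$ produces the middle term $B^{rt/2}A^{(1-t)r}B^{rt/2}$, which matches the exponents $r/4,\,r/2,\,r/4$ of the statement only at $t=1/2$ (after using the symmetry $A\sharp_{1/2}B=B\sharp_{1/2}A$ to swap the roles of $A$ and $B$). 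The paper's proof has exactly the same limitation, and indeed the statement cannot hold for all $t$ with $t$-independent exponents: at $t=0$ it would assert $\ui A^r\ui\le\ui A^{r/4}B^{r/2}A^{r/4}\ui$ with $A=\sum A_i$, $B=\sum B_i$, which fails when $B$ is small. So your honest restriction to the symmetric case is not a gap in your argument but a defect of the theorem as stated; your proof establishes everything the paper's proof does.
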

\begin{proof}
Since the function $x^r$ is convex and monotone increasing, by Theorem \ref{theorBU} and the concavity of the $t$-geometric means, we have
\begin{align}\label{11}
\ui\sum_{i=1}^m(A_i\sharp_t B_i)^r \ui  \le \ui (\sum_{i=1}^m A_i \sharp_t B_i)^r \ui  \le \ui \big((\sum_{i=1}^m A_i) \sharp_t (\sum_{i=1}^m B_i)\big)^r \ui.
\end{align}
On account of Lemma \ref{uia} 
\begin{equation}\label{22}
\ui \big((\sum_{i=1}^m A_i) \sharp_t (\sum_{i=1}^m B_i)\big)^r \ui \le  \ui (\sum_{i=1}^m A_i)^{r/4}(\sum_{i=1}^m B_i)^{r/2} (\sum_{i=1}^m A_i)^{r/4} \ui \le \ui (\sum_{i=1}^m A_i)^{r/2}(\sum_{i=1}^m B_i)^{r/2} \ui.
\end{equation}
Combining (\ref{11}) and (\ref{22}), we get (\ref{ineq3}).
\end{proof}

\begin{remark}
The case when $r=2$ was considered in \cite{hoatam}. The results of Audenaert \cite{AU} and Hayajneh-Kittaneh \cite{HK} are the special case of (\ref{ineq3}) when $r=2$ and $t=\frac{1}{2}$.
\end{remark}

\end{document}